\newtheorem{Athm}{Theorem}
\newtheorem{Alem}[Athm]{Lemma}
\theoremstyle{definition}
\newtheorem*{rem}{Note}
\newtheorem{Adefn}[Athm]{Definition}
\theoremstyle{remark}
\journal{Semigroup Forum (Nov. 24, 2015; accepted Jun. 8, 2016)}
\begin{document}

\begin{frontmatter}
\title{Dynamics of Bohr almost periodic motions of topological abelian semigroups}
\author{Xiongping Dai}
\ead{xpdai@nju.edu.cn}

\address{Department of Mathematics, Nanjing University, Nanjing 210093, People's Republic of China}

\begin{abstract}
We study the topological and ergodic dynamics of Bohr almost periodic motions of a topological abelian semigroup acting continuously on a compact metric space.
\end{abstract}

\begin{keyword}
Bohr almost periodic motion $\cdot$ Topological abelian semigroup

\medskip
\MSC[2010] 37B20 $\cdot$ 37A30
\end{keyword}
\end{frontmatter}


\section{Introduction}\label{sec1}
We will consider, in this paper, the topological and ergodic dynamics of Bohr almost periodic motions of topological abelian semigroups acting continuously on a compact metric space $(X,d)$.

Let $f\colon\mathbb{Z}_+\times X\rightarrow X;\ (t,x)\mapsto f(t,x)$ be a discrete-time semi flow on the space $X$.
Recall that a point $x\in X$ or a motion $f(\centerdot,x)$ is called \textit{almost periodic of Bohr} if and only if
\begin{itemize}
\item for any $\varepsilon>0$ there exists a relatively dense subset $\{\tau_n\}$ of $\mathbb{Z}_+$ which possesses the following property:
\begin{equation*}d(f(t,x),f(t+\tau_n,x))<\varepsilon\quad\forall t\in\mathbb{Z}_+.\end{equation*} 
Here a subset $S$ of $\mathbb{Z}_+$ is called `relatively dense' if one can find an integer $L>0$ such that $S\cap[n,n+L)\not=\varnothing\ \forall n\in\mathbb{Z}_+$; cf.~\cite[Definition~V7.08]{NS}.
\end{itemize}
This is equivalent to say that $f_x\colon\mathbb{Z}_+\rightarrow X$, defined by $t\mapsto f(t,x)$, is a Bohr almost periodic function (Bohr 1925, 1926).
See \cite[Definition~V8.01]{NS} for the $\mathbb{R}$-action system case.

By a topological semigroup, like $\mathbb{R}_+^d, \mathbb{Z}_+^d$ and $\mathbb{N}^d$, we mean a semigroup endowed with a Hausdorff topology that renders the algebraic  operations sum $+$ or product $\circ$ continuous.
We now will extend Bohr almost periodic motion to topological semigroup acting continuously on the compact metric space $X$.

From here on out, assume $G$ is a topological semigroup with a product operation $\circ$ and an identity $e$, which acts continuously from left on the compact metric space $X$,
\begin{gather*}
\pi\colon G\times X\rightarrow X,
\end{gather*}
simply written as
\begin{equation*}
G\curvearrowright X;\quad (g,x)\mapsto g(x)=\pi(g,x).
\end{equation*}
Here are the basic notations we study in this paper.

\begin{Adefn}\label{Adef1}
Let $(G,\circ)$ be a topological semigroup with an identity element $e$.
\begin{enumerate}
\item[(a)] A subset $T$ of $G$ is called \textit{left syndetic} in $G$ if there exists a compact subset $L$ of $G$ such that $T\circ L=G$ (cf.~\cite[Definition~2.02]{GH}).

\item[(b)] Further a point $x\in X$ is called \textit{Bohr almost periodic for $G\curvearrowright X$} if for any $\varepsilon>0$ the $\varepsilon$-error period set
\begin{equation*}
\mathbb{P}(\varepsilon)=\left\{\tau\in G\,|\, d(g(x),\tau\circ g(x))<\varepsilon\ \forall g\in G\right\}
\end{equation*}
is left syndetic in $G$ under the sense of (a).
\end{enumerate}

Clearly, if $x$ is Bohr almost periodic for $G\curvearrowright X$, then each point of the orbit $G(x)$ is also Bohr almost periodic for $G\curvearrowright X$. Thus we may say $G(x)$ is Bohr almost periodic and moreover the subsystem $G\curvearrowright\textrm{cls}_X{G(x)}$ is von Neumann almost periodic (cf.~\cite[Remark~4.32]{GH} and \cite{ChD}).
\end{Adefn}

Since every syndetic subset of $\mathbb{Z}_+$ is also relatively dense, Def.~\ref{Adef1}(b) is a generalization of the classical Bohr almost periodic motion. However, for a $\mathbb{Z}_+$-action, generally an almost periodic motion in the sense of \cite{NS} is not necessarily to be Bohr almost periodic in the sense of Def.~\ref{Adef1}(b); this is because a relatively dense subset $S$ of $\mathbb{Z}_+$ like $S=3\mathbb{N}$ does not need to be syndetic in the sense of Def.~\ref{Adef1}(a).
See, e.g.,~\cite{ChD} for some comparison with uniform recurrence and von Neumann almost periodicity. Particularly, since $G$ is not necessarily a group here, we even cannot sure the orbit closure $\textrm{cls}_X{G(x)}$ of a Bohr almost periodic point $x$ for $G\curvearrowright X$ is minimal when $G\not=\mathbb{Z}_+^d$ and $\not=\mathbb{R}_+^d$ (cf.~\cite[Corollary~2.7]{ChD}).

It should be noted here that the $\varepsilon$-error period set $\mathbb{P}(\varepsilon)$ in Def.~\ref{Adef1}(b) is not required to be a \textit{sub-semigroup} of $G$. Otherwise it is named ``uniform regular almost periodicity'' and the latter is systematically studied in \cite{Eg, MR} when $G$ is assumed to be a topological group.

In this paper, we shall consider the topological structure (Theorem~\ref{Athm4} in $\S\ref{sec2}$), the probabilistic structure (Theorem~\ref{Athm6} in $\S\ref{sec3}$), and the pointwise ergodic behavior (Theorem~\ref{Athm9} in $\S\ref{sec4}$), of Bohr almost periodic motions of topological abelian semigroups acting continuously on the compact metric space $X$.
The theory of Bohr almost periodic motions of abelian semigroup herself has some essential difficulties comparing with the abelian groups. For example, this theory will involve the Haar measure of a locally compact second countable abelian semigroup; but such a semigroup does not need to have a classical Haar measure like $G=\mathbb{R}_+$ and $\mathbb{Z}_+$, since the Lebesgue and the counting measures are not translation invariant; for example, for $L\colon x\mapsto x+1$ from $\mathbb{Z}_+$ to $\mathbb{Z}_+$, $3=\#\{0,1,2\}\not=\#L^{-1}\{0,1,2\}=2$.

\section{Topological structures}\label{sec2}
The following theorem establishes the equicontinuity of Bohr almost periodic motions of a topological semigroups $G$ acting continuously on a compact metrizable space $X$, which is a generalization of A.A. Markov's Lyapunov stability theorem
of continuous-time flows with $G=\mathbb{R}$ (cf.~\cite[Theorem~V8.05]{NS}).

\begin{Athm}\label{Athm2}
Let $y\in X$ be Bohr almost periodic for $G\curvearrowright X$. Then the family of transformations
\begin{gather*}
\left\{g\colon\mathrm{cls}_X{G(y)}\rightarrow\mathrm{cls}_X{G(y)};\ x\mapsto g(x)\right\}_{g\in G}
\end{gather*}
is equicontinuous; that is to say, 
\begin{itemize}
\item 
for any $\varepsilon>0$ there exists a $\delta>0$ such that if $x,z\in\mathrm{cls}_X{G(y)}$ with $d(x,z)<\delta$ then $d(g(x),g(z))<\varepsilon$ for every $g\in G$.
\end{itemize}
\end{Athm}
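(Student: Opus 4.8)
The plan is to prove equicontinuity by a standard "three-epsilon" argument, exploiting the Bohr almost periodicity of $y$ to control the behavior of the transformations $g$ uniformly in $g$. The key insight is that although $G$ is only a semigroup and need not act by homeomorphisms, the orbit $G(y)$ is dense in $\mathrm{cls}_X G(y)$, so it suffices to establish the $\delta$-$\varepsilon$ estimate for points $x,z$ lying in the orbit itself and then pass to the closure by continuity of each fixed $g$ together with uniform continuity on the compact space $X$.

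First I would fix $\varepsilon>0$ and invoke Definition~\ref{Adef1}(b): the set $\mathbb{P}(\varepsilon/3)$ is left syndetic, so there is a compact set $L\subseteq G$ with $\mathbb{P}(\varepsilon/3)\circ L=G$. The crucial structural observation is then that, since $G$ is abelian, for any $\tau\in\mathbb{P}(\varepsilon/3)$ and any $g\in G$ we have $d(h(y),\tau\circ h(y))<\varepsilon/3$ for all $h\in G$, and in particular this inequality holds along the whole orbit with the roles of $\tau$-translation commuting past $g$. Because $L$ is compact and the action $\pi\colon G\times X\to X$ is continuous on a compact space $X$, the finite family $\{\ell\colon X\to X\}_{\ell\in L}$ is uniformly equicontinuous; more precisely, by compactness of $L\times X$ and continuity of $\pi$, I can choose a single $\delta>0$ such that $d(x,z)<\delta$ implies $d(\ell(x),\ell(z))<\varepsilon/3$ for every $\ell\in L$ simultaneously.

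Next I would carry out the main estimate. Given an arbitrary $g\in G$, factor it using left syndeticity as $g=\tau\circ\ell$ with $\tau\in\mathbb{P}(\varepsilon/3)$ and $\ell\in L$. For points $x,z\in\mathrm{cls}_X G(y)$ with $d(x,z)<\delta$, I write $d(g(x),g(z))=d(\tau(\ell(x)),\tau(\ell(z)))$ and control it via the triangle inequality through the intermediate points $\ell(x)$ and $\ell(z)$: the outer two legs are bounded by $\varepsilon/3$ using the defining property of $\mathbb{P}(\varepsilon/3)$ along the orbit, while the middle leg $d(\ell(x),\ell(z))<\varepsilon/3$ follows from the uniform equicontinuity of the compact family $L$ established above. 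Summing gives $d(g(x),g(z))<\varepsilon$, and since $g\in G$ was arbitrary and $\delta$ was chosen independently of $g$, this yields the desired equicontinuity.

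The main obstacle, and the step requiring the most care, is making the defining almost-periodicity inequality apply to the translated points $\ell(x)$ and $\ell(z)$ rather than merely to points of the form $h(y)$. The estimate $d(g(x),g(z))<\varepsilon$ must hold for all $x,z$ in the orbit closure, but $\mathbb{P}(\varepsilon/3)$ was defined using the single base point $y$; bridging this gap is precisely where the density of $G(y)$ and the commutativity of $G$ enter, allowing one to approximate $x,z$ by orbit points $g_1(y),g_2(y)$ and transfer the inequality $d(\tau(w),w)<\varepsilon/3$ from $w=h(y)$ to $w=\ell(g_i(y))=(\ell\circ g_i)(y)$, which is again a point of the orbit. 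One must verify that this approximation survives the limit, using that each $g$ is continuous and $X$ is compact metric; I would state this as a separate closure lemma if the passage proves delicate.
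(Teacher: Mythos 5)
Your proposal is correct and follows essentially the same route as the paper: factor $g=\tau\circ\ell$ via left syndeticity of $\mathbb{P}(\varepsilon/3)$, choose $\delta$ uniformly over $\ell\in L$ by compactness of $L\times\mathrm{cls}_X{G(y)}$ and continuity of the action, run the three-$\varepsilon$ triangle inequality through the orbit points $(\ell\circ t_i)(y)$, and pass to the closure by density and continuity. One minor remark: commutativity of $G$ is not actually needed anywhere here --- transferring the almost-periodicity inequality to $w=(\ell\circ g_i)(y)$ is just the definition of $\mathbb{P}(\varepsilon/3)$ applied with $h=\ell\circ g_i$, and accordingly the paper's Theorem~\ref{Athm2} does not assume $G$ abelian.
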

\begin{rem}
See~\cite[Theorem~4.37]{GH} for the group case under the guise that $G\curvearrowright X$ is (Bohr) almost periodic. If the compactness of the underlying space $X$ is weakened by the uniform continuity of the transformations $g\colon X\rightarrow X$ for all $g\in G$ (cf.~\cite[Definition~4.36]{GH}) but $G$ is discrete there, then the statement of this theorem still holds by the same argument below.
\end{rem}

\begin{proof}
Let $\varepsilon>0$ be arbitrarily given. Then because of the Bohr almost periodicity of the point $y$ for $G\curvearrowright X$ there exists a compact subset $L=L(\varepsilon)$ of $G$ for which the $\frac{\varepsilon}{3}$-error period set
$$
\mathbb{P}(\varepsilon/3)=\left\{\tau\in G\,|\,d(g(y),\tau\circ g(y))<\frac{\varepsilon}{3}\ \forall g\in G\right\}
$$
is such that $\mathbb{P}(\varepsilon/3)\circ L=G$.
Since $\textrm{cls}_X{G(y)}$ and $L$ both are compact and $G$ acts continuously on $X$, there exists a number $\delta>0$ such that for any two points $x,z\in\textrm{cls}_X{G(y)}$ it will follow from $d(x,z)<\delta$ that
$$
d(\ell(x),\ell(z))<\frac{\varepsilon}{3}\quad\forall \ell\in L.
$$
To prove Theorem~\ref{Athm2}, it is sufficient to show that: if $d(t_1(y),t_2(y))<\delta$ for two elements $t_1,t_2\in G$, then $d(g\circ t_1(y),g\circ t_2(y))<\varepsilon$ for all $g\in G$.

For that, we now choose an arbitrary $g\in G$ and we then can pick two elements $\ell\in L$ and $\tau\in\mathbb{P}(\varepsilon/3)$ such that $g=\tau\circ\ell$. Hence
\begin{equation*}\begin{split}
d(g\circ t_1(y),g\circ t_2(y))&=d(\tau\circ\ell\circ t_1(y),\tau\circ\ell\circ t_2(y))\\
&<d(\ell\circ t_1(y),\ell\circ t_2(y))+\frac{2\varepsilon}{3}\\
&<\varepsilon.
\end{split}\end{equation*}
This concludes the proof of Theorem~\ref{Athm2}.
\end{proof}

For our further result, we need a lemma in which we assume $G$ is commutative.

\begin{Alem}\label{Alem3}
Let $y\in X$ be a Bohr almost periodic point for $G\curvearrowright X$ and $t_n,s_n\in G$, where $G$ is a topological abelian semigroup. Then if $\{t_n(y)\}_1^\infty$ and $\{s_n(y)\}_1^\infty$ are two Cauchy sequences in $X$, the sequence $\{t_n\circ s_n(y)\}_1^\infty$ is also of Cauchy in $X$.
\end{Alem}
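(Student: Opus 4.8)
The plan is to derive the claim entirely from the equicontinuity established in Theorem~\ref{Athm2}, using the commutativity of $G$ at the one decisive place. Observe first that every point $t_n(y)$ and $s_n(y)$ lies in $\mathrm{cls}_X{G(y)}$, so the equicontinuous family $\{g\colon\mathrm{cls}_X{G(y)}\to\mathrm{cls}_X{G(y)}\}_{g\in G}$ may legitimately be applied to all of them, and the intermediate images we form below stay inside $\mathrm{cls}_X{G(y)}$ as well.

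Fix $\varepsilon>0$. By Theorem~\ref{Athm2} there is a $\delta>0$ such that $d(x,z)<\delta$ with $x,z\in\mathrm{cls}_X{G(y)}$ forces $d(g(x),g(z))<\varepsilon$ for every $g\in G$. Since $\{t_n(y)\}_1^\infty$ and $\{s_n(y)\}_1^\infty$ are both Cauchy, I would choose $N$ so large that $d(t_n(y),t_m(y))<\delta$ and $d(s_n(y),s_m(y))<\delta$ whenever $n,m\ge N$.

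For such $n,m$ I would split by the triangle inequality,
$$d(t_n\circ s_n(y),t_m\circ s_m(y))\le d(t_n\circ s_n(y),t_m\circ s_n(y))+d(t_m\circ s_n(y),t_m\circ s_m(y)).$$
The second summand equals $d\big(t_m(s_n(y)),t_m(s_m(y))\big)$ because the action is a semigroup action; since $d(s_n(y),s_m(y))<\delta$, equicontinuity with $g=t_m$ bounds it by $\varepsilon$. For the first summand I would invoke commutativity: as $G$ is abelian, $t_n\circ s_n=s_n\circ t_n$ and $t_m\circ s_n=s_n\circ t_m$, so the first summand equals $d\big(s_n(t_n(y)),s_n(t_m(y))\big)$, and since $d(t_n(y),t_m(y))<\delta$, equicontinuity with $g=s_n$ bounds it by $\varepsilon$. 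Hence $d(t_n\circ s_n(y),t_m\circ s_m(y))<2\varepsilon$ for all $n,m\ge N$, which shows $\{t_n\circ s_n(y)\}_1^\infty$ is Cauchy.

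The only genuine subtlety — and the single spot where the abelian hypothesis is indispensable — is the first summand. Equicontinuity controls a \emph{fixed} transformation evaluated at two nearby points; in $t_n\circ s_n(y)$ and $t_m\circ s_n(y)$ the inner point $s_n(y)$ is common but the outer maps $t_n,t_m$ differ, so Theorem~\ref{Athm2} does not apply as written. Commutativity lets me rewrite these as $s_n$ applied to the two nearby points $t_n(y),t_m(y)$, restoring the ``one map, two close points'' shape that the equicontinuity statement requires. Everything else is a routine application of the Cauchy hypotheses and the triangle inequality.
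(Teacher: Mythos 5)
Your proof is correct and follows essentially the same route as the paper's: both use the equicontinuity from Theorem~\ref{Athm2}, split $d(t_n\circ s_n(y),t_m\circ s_m(y))$ via the intermediate point $t_m\circ s_n(y)=s_n\circ t_m(y)$, and invoke commutativity exactly where you do. The only cosmetic difference is that the paper calibrates with $\varepsilon/2$ to land on $\varepsilon$ rather than $2\varepsilon$.
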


\begin{proof}
Let $\varepsilon>0$ be any given. Then by Theorem~\ref{Athm2}, one can find some number $\delta=\delta(\varepsilon/2)>0$ so that for any $x,z\in\textrm{cls}_X{G(y)}$ with $d(x,z)<\delta$ there follows
\begin{equation*}
d(t(x),t(z))<\frac{\varepsilon}{2}\quad \forall t\in G.
\end{equation*}
Now according to the hypotheses of the lemma, there exists an $N>0$ such that for any $n\ge N$ and $m\ge N$ we have
$$
d(t_n(y), t_m(y))<\delta\quad \textrm{and}\quad d(s_n(y), s_m(y))<\delta.
$$
From these inequalities we can get
$$
d(s_n\circ t_n(y), s_n\circ t_m(y))<\frac{\varepsilon}{2},\quad d(t_m\circ s_n(y), t_m\circ s_m(y))<\frac{\varepsilon}{2},
$$
and hence by the commutativity of $G$, we can obtain that
$$
d(t_n\circ s_n(y), t_m\circ s_m(y))<\varepsilon.
$$
This completes the proof of Lemma~\ref{Alem3}.
\end{proof}

The structure of a set consisting of a Bohr almost periodic motion is characterized by the following theorem, the $\mathbb{R}$-action case is due to V.V.~Nemytskii (cf.~\cite[Theorem~V8.16]{NS}).

\begin{Athm}\label{Athm4}
Let $G\curvearrowright X$ be a continuous action of a topological abelian semigroup $G$ on the compact metric space $X$.
If $y\in X$ is a Bohr almost periodic point for $G\curvearrowright X$, then $\mathrm{cls}_X{G(y)}$ is a compact abelian semigroup having a binary operation $\diamond$ with
\begin{equation*}
g(y)\diamond h(y)=(g\circ h)(y)\quad \forall g,h\in G,
\end{equation*}
such that if $G$ has an identity $e$, then $\mathrm{cls}_X{G(y)}$ has the identity $y$ and that $G\curvearrowright\mathrm{cls}_X{G(y)}$ is characterized by the translation $(g,x)\mapsto g(x)=g(y)\diamond x$ for all $g\in G$ and $x\in\mathrm{cls}_X{G(y)}$.
\end{Athm}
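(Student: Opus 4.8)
The plan is to define $\diamond$ on the orbit by $g(y)\diamond h(y)=(g\circ h)(y)$ and to extend it to the closure by continuity, with Lemma~\ref{Alem3} guaranteeing that the extension is legitimate. Concretely, given $p,q\in\mathrm{cls}_X G(y)$ I would pick sequences $t_n,s_n\in G$ with $t_n(y)\to p$ and $s_n(y)\to q$, which is possible since $X$ is a compact metric space and hence first countable. The sequences $\{t_n(y)\}$ and $\{s_n(y)\}$ are then Cauchy, so by Lemma~\ref{Alem3} the sequence $\{(t_n\circ s_n)(y)\}$ is Cauchy as well; as $X$ is compact (hence complete) it converges, and its limit lies in the closed set $\mathrm{cls}_X G(y)$. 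I would then set $p\diamond q:=\lim_n(t_n\circ s_n)(y)$.

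The first point to settle is well-definedness, i.e.\ independence of the chosen sequences. Here the key device is interleaving: if $t_n'(y)\to p$ and $s_n'(y)\to q$ are alternative choices, I would form the shuffled sequences $t_1,t_1',t_2,t_2',\dots$ and $s_1,s_1',s_2,s_2',\dots$, which still converge at $y$ to $p$ and $q$. Lemma~\ref{Alem3} forces the shuffled product sequence to be Cauchy, hence convergent, so its two subsequences $\{(t_n\circ s_n)(y)\}$ and $\{(t_n'\circ s_n')(y)\}$ must share a common limit. Taking constant sequences $t_n\equiv g$ and $s_n\equiv h$ then recovers $g(y)\diamond h(y)=(g\circ h)(y)$, and closedness of $\mathrm{cls}_X G(y)$ shows $\diamond$ maps back into $\mathrm{cls}_X G(y)$.

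Next I would record the identity $g(y)\diamond q=g(q)$ for every $g\in G$ and $q\in\mathrm{cls}_X G(y)$: taking $t_n\equiv g$ and $s_n(y)\to q$ gives $g(y)\diamond q=\lim_n g(s_n(y))=g(q)$ by continuity of the transformation $g$. This single identity yields two of the asserted conclusions simultaneously: it is exactly the characterization $g(x)=g(y)\diamond x$ of the action, and, specializing $g=e$ so that $e(y)=y$, it shows $y$ is a two-sided identity for $\diamond$. Commutativity and associativity of $\diamond$ then descend from the corresponding laws for $\circ$ through the sequence definition; for associativity one applies Lemma~\ref{Alem3} twice and uses $(t_n\circ s_n)\circ r_n=t_n\circ(s_n\circ r_n)$.

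The main obstacle is the continuity of $\diamond$, which is what promotes $\mathrm{cls}_X G(y)$ from an abstract abelian semigroup to a compact topological one, and here I would lean on the equicontinuity supplied by Theorem~\ref{Athm2}. First I would upgrade the defining limit to the cleaner formula $p\diamond q=\lim_n t_n(q)$ whenever $t_n(y)\to p$: since $\{t_n\}$ is equicontinuous, $t_n(s_m(y))\to t_n(q)$ uniformly in $n$ as $m\to\infty$, which justifies passing to the diagonal and identifying $\lim_n t_n(q)$ with $\lim_n(t_n\circ s_n)(y)=p\diamond q$. Consequently each left translation $L_p\colon q\mapsto p\diamond q$ is a pointwise limit of the equicontinuous family $\{t_n\}$, hence continuous, and the whole family $\{L_p\}_{p\in\mathrm{cls}_X G(y)}$ inherits the same equicontinuity modulus. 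Joint continuity then falls out: for $p_n\to p$ and $q_n\to q$ I would split $d(p_n\diamond q_n,p\diamond q)$ into $d(L_{p_n}(q_n),L_{p_n}(q))$, controlled by equicontinuity of the $L_{p_n}$, and $d(L_{p_n}(q),L_p(q))=d(q\diamond p_n,q\diamond p)$, controlled by continuity of the single map $L_q$ together with commutativity. I expect the interchange-of-limits step in establishing $p\diamond q=\lim_n t_n(q)$ to be the most delicate point, and it is precisely there that Theorem~\ref{Athm2} is indispensable.
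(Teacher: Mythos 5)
Your proposal is correct and follows essentially the same route as the paper: define $\diamond$ on the orbit via $g(y)\diamond h(y)=(g\circ h)(y)$, extend to the closure using Lemma~\ref{Alem3} to guarantee the limit exists, and derive continuity of $\diamond$ from the equicontinuity in Theorem~\ref{Athm2}. Your treatment is in fact somewhat more careful than the paper's at two points the paper passes over quickly --- the independence of the extension from the choice of approximating sequences (your interleaving trick) and the explicit verification that $y$ is the identity and that the action is given by left translation.
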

\begin{rem}
See \cite{ST} in the case $G=\mathbb{R}$ and see \cite[Theorem~4.48]{GH} in the abelian group case for the related results.
\end{rem}

\begin{proof}
Let $y$ be Bohr almost periodic for $G\curvearrowright X$ and simply write $K=\textrm{cls}_X{G(y)}$. We will define in $K$ a commutative binary operation $\diamond$ as follows:

First, let $x,z\in G(y)$, i.e., $x=t_x(y)$ and $z=t_z(y)$ for some $t_x,t_z\in G$; the identity of the semigroup $K$ will be the point $y=e(y)$ if $G$ contains an identity $e$; we then define the commutative binary operation as $x\diamond z=t_x\circ t_z(y)=z\diamond x$. If $x=g(y)=g^\prime(y)$ for some pair $g,g^\prime\in G$ with $g\not=g^\prime$ and $z=t_z(y)$, then
$$
g\circ t_z(y)=t_z(g(y))=t_z(g^\prime(y))=g^\prime\circ t_z(y).
$$
Thus $x\diamond z$ is well defined and commutative in $G(y)$.
This binary operation $\diamond$ in $G(y)$ clearly satisfies the semigroup axioms and it is continuous.

We now need to extend this operation $\diamond$ by continuity to the whole of $K$. For this, let $x\in K$ with $x=\lim_{n\to\infty}t_n^x(y)$ and let $z\in K$ with $z=\lim_{n\to\infty}t_n^z(y)$. Then, by definition,
$$
x\diamond z=\lim_{n\to\infty}t_n^x\circ t_n^z(y)=z\diamond x.
$$
The above limit exists, since by Lemma~\ref{Alem3} the sequence $t_n^x\circ t_n^z(y)$ is of Cauchy in $X$ and $X$ is complete. Clearly this binary operation satisfies the algebraic axioms required by a semigroup.

Now we shall prove the continuity of the operation $\diamond$ defined above. For this, we let $x=\lim_{n\to\infty}t_n^x(y)$ and $z=\lim_{n\to\infty}t_n^z(y)$, and let there be given any $\varepsilon>0$. We define $\delta=\delta(\varepsilon/3)$ by Theorem~\ref{Athm2}. Assume
$$
d(x,x^\prime)<\frac{\delta}{3},\ x^\prime=\lim_{n\to\infty}t_n^{x^\prime}(y)\quad \textrm{and}\quad d(z,z^\prime)<\frac{\delta}{3},\ z^\prime=\lim_{n\to\infty}t_n^{z^\prime}(y).
$$
There is some $N>0$ such that for any $n\ge N$, we have
$$
d(x,t_n^x(y))<\frac{\delta}{3},\ d(x^\prime,t_n^{x^\prime}(y))<\frac{\delta}{3},
$$
and
$$
d(z,t_n^z(y))<\frac{\delta}{3},\ d(z^\prime,t_n^{z^\prime}(y))<\frac{\delta}{3}.
$$
Then we get
$$
d(t_n^x(y),t_n^{x^\prime}(y))<\delta\ \textrm{ and }\ d(t_n^z(y),t_n^{z^\prime}(y))<\delta\quad \forall n\ge N.
$$
Further, by the triangle inequality and the equicontinuity, we get
\begin{equation*}\begin{split}
d(x\diamond z,x^\prime\diamond z^\prime)&=d(x\diamond z,x^\prime\diamond z)+d(x^\prime\diamond z,x^\prime\diamond z^\prime)\\
&=\lim_{n\to\infty}d(t_n^x\circ t_n^z(y), t_n^{x^\prime}\circ t_n^z(y))+\lim_{n\to\infty}d(t_n^{x^\prime}\circ t_n^z(y), t_n^{x^\prime}\circ t_n^{z^\prime}(y))\\
&\le\frac{\varepsilon}{3}+\frac{\varepsilon}{3}
\end{split}\end{equation*}
as $n\ge N$. Therefore, under this binary operation $\diamond$, $K$ is a compact abelian semigroup with the required properties.

This completes the proof of Theorem~\ref{Athm4}.
\end{proof}

The above proof is an improvement of the necessity of \cite[Theorem~V.8.16]{NS} for $G=\mathbb{R}$. When $G=\mathbb{Z}$, then $G\curvearrowright K$ in Theorem~\ref{Athm4} is exactly a Kronecker system; cf.~\cite[Theorem~1.9]{Fur}.

In fact, we note here that if $G\curvearrowright X$ is topologically transitive and equicontinuous, then the statement of Theorem~\ref{Athm4} still holds by analogous arguments. In addition, if $G$ is assumed to be a topological abelian group, one can further show that $\mathrm{cls}_X{G(y)}$ is a compact abelian group.

\section{Probabilistic structures}\label{sec3}
We now turn to the probabilistic or ergodic theory of Bohr almost periodic motions of topological abelian semigroups acting continuously on compact metric spaces in the last two sections.

The classical Haar measure on a locally compact second countable group possesses the property that it has positive measures for every open subsets of the group. However, even for a compact countable abelian semigroup, this is not a case. For example, let $\bar{\mathbb{Z}}_+=\mathbb{Z}_+\cup\{\infty\}$ be the one-point compactification of the discrete additive semigroup $(\mathbb{Z}_+,+)$, endowed with the multiplicative binary operation $\circ$ as follows:
\begin{equation*}
s\circ t=s+t\quad \forall s,t\in\bar{\mathbb{Z}}_+.
\end{equation*}
Then the atomic probability measure $\delta_{\infty}$ concentrated at the point $\infty$ is the unique translation-invariant probability measure on $(\bar{\mathbb{Z}}_+,\circ)$.

As mentioned before, the classical Haar-Weil theorem that asserts the existence and uniqueness of Haar measures for locally compact second countable (abbreviated lcsc) groups, does not work in our situations. However we will need the following preliminary lemma.

\begin{Alem}[Haar measure]\label{Alem5}
Let $(K,\diamond)$ be a compact second countable abelian \textit{semigroup}. Then $K\curvearrowright K$, given by $(g,k)\mapsto g(k)=g\diamond k$ for all $g,k\in K$, has a unique $K$-invariant probability measure $m_K$, called the \textit{Haar measure} of $K$ as in the lcsc group case.
\end{Alem}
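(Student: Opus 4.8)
The plan is to pass from the action $K\curvearrowright K$ to the space $P(K)$ of Borel probability measures on $K$ with the weak${}^*$ topology, and to exploit the convolution structure that the abelian operation $\diamond$ induces there. Since $K$ is compact and second countable, $P(K)$ is a nonempty, convex, weak${}^*$-compact (and metrizable) subset of the dual $C(K)^*$. For $\mu,\nu\in P(K)$ I would define the convolution $\mu*\nu$ by $\int_K f\,d(\mu*\nu)=\int_K\int_K f(s\diamond t)\,d\mu(s)\,d\nu(t)$ for all $f\in C(K)$; this is well defined because $\diamond$ is continuous, it is affine and weak${}^*$-continuous in each variable, and, crucially, it is commutative, since $s\diamond t=t\diamond s$ together with Fubini's theorem gives $\mu*\nu=\nu*\mu$. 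One then observes that $\mu$ is $K$-invariant in the sense of the lemma exactly when $g_*\mu=\mu$ for every $g\in K$, that $g_*\mu=\delta_g*\mu$, and, writing $\nu=\int_K\delta_g\,d\nu(g)$, that this is equivalent to $\nu*\mu=\mu$ for every $\nu\in P(K)$.

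For existence I would produce a common fixed point of the translations. For each $g\in K$ the map $T_g\colon P(K)\to P(K)$, $T_g\mu=\delta_g*\mu$, is affine and weak${}^*$-continuous, and the family $\{T_g\}_{g\in K}$ commutes because $\delta_g*\delta_h=\delta_{g\diamond h}=\delta_{h\diamond g}=\delta_h*\delta_g$ and convolution is associative. As $P(K)$ is a compact convex subset of the locally convex space $C(K)^*$ under the weak${}^*$ topology, the Markov--Kakutani fixed point theorem yields a measure $m_K\in P(K)$ with $T_g m_K=m_K$ for every $g\in K$; that is, $m_K$ is $K$-invariant.

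For uniqueness, suppose $\mu_1$ and $\mu_2$ are both $K$-invariant. By the equivalence noted above, $\nu*\mu_i=\mu_i$ for every $\nu\in P(K)$, and in particular $\mu_2*\mu_1=\mu_1$ and $\mu_1*\mu_2=\mu_2$. Commutativity of convolution then forces $\mu_1=\mu_2*\mu_1=\mu_1*\mu_2=\mu_2$, so the invariant measure is unique, and we take $m_K$ to be the measure produced in the previous step.

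The routine verifications, namely that the double integral defines a genuine element of $P(K)$, that convolution is separately weak${}^*$-continuous and associative, and that the disintegration $\nu*\mu=\int_K(\delta_g*\mu)\,d\nu(g)$ is legitimate, are standard and I would dispatch each in a line. The main conceptual point, and the only place where the commutativity of $K$ is genuinely used, is the commutativity of convolution: it is what makes the Markov--Kakutani family commute (hence existence) and what collapses the two one-sided identities into a single measure (hence uniqueness), so it is no surprise that the hypothesis enters twice. An alternative, more structural route would be to invoke the fact that the minimal ideal (kernel) of a compact abelian semigroup is a compact abelian group $M$ and to take $m_K$ to be its Haar measure extended by zero to $K$; but the fixed-point argument is shorter and avoids quoting the structure theory of compact semigroups.
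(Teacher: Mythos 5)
Your proposal is correct and is essentially the paper's own argument: the paper likewise obtains existence by applying the Markov--Kakutani fixed-point theorem to the commuting family of translation-induced maps on the space of Borel probability measures, and its uniqueness step is exactly your computation $\mu_1=\mu_2*\mu_1=\mu_1*\mu_2=\mu_2$, written out as a chain of Fubini identities rather than in convolution notation. The convolution formalism is only a repackaging, so no substantive difference to report.
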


\begin{proof}
\textit{Existence of Haar measure}: First of all, by the Markov-Kakutani fixed-point theorem, there exists at least one common invariant Borel probability measure, write $m_K$, on $K$ for all the commuting continuous transformations
\begin{equation*}
\left\{L_g\colon K\rightarrow K;\quad k\mapsto g(k)\right\}_{g\in K}.
\end{equation*}
Since $K$ is commutative, clearly $m_K$ is both left- and right-invariant for all $g\in K$; in other words, 
\begin{gather*}
m_K\circ L_g^{-1}=m_K=m_K\circ R_g^{-1}\quad \forall g\in G.
\end{gather*}

\textit{Unicity of Haar measure}: Let there exists another such Borel probability measure $\mu$ on $K$. Using the invariance of $m_K$ and $\mu$ together with Fubini's theorem, we get for any $\varphi\in C(K)$
\begin{equation*}\begin{split}
\int_K\varphi d\mu&=\int_K\left(\int_K\varphi(y)d\mu(y)\right)dm_K(x)\\
&=\int_K\left(\int_K\varphi(R_x(y))d\mu(y)\right)dm_K(x)\\
&=\int_K\left(\int_K\varphi(L_y(x))dm_K(x)\right)d\mu(y)\\
&=\int_K\left(\int_K\varphi(x)dm_K(x)\right)d\mu(y)\\
&=\int_K\varphi dm_K
\end{split}\end{equation*}
Since $\varphi$ is arbitrary, we conclude that $m_K=\mu$ and the asserted uniqueness follows.

This thus completes the proof of Lemma~\ref{Alem5}.
\end{proof}

The existence proofs of the classical Haar-Weil theorem presented in available literature for an lcsc group $G$ are complicated and the inversion $g\rightarrow g^{-1}$ from $G$ onto $G$ plays an important role (cf.~\cite[Theorem~14.14]{Roy}). To get around the difficulty caused by no inversion, we have employed the Markov-Kakutani theorem in the above proof of Lemma~\ref{Alem5}. In fact, this is not a new method for proving the existence of invariant measures. The credit goes back to at least Mahlon Day (1961).

Recall that for the $\mathbb{Z}_+$-action dynamical system $f\colon\mathbb{Z}_+\times X\rightarrow X$ on the compact metric space $X$, it is called \textit{strictly ergodic} if it consists of a unique ergodic set and all points of the underlying space $X$ are density points with respect to this measure; see \cite[Definition~VI.9.33]{NS}.
It is a well-known interesting fact that every minimal set consisting of Bohr almost periodic motions $f(\centerdot,x)$ is strictly ergodic (cf.~\cite[Theorem~VI.9.34]{NS}). To prove this one needs the equicontinuity of a Bohr almost periodic motion $f(\centerdot,x)$ and an ergodic theorem of Bohr that says the time-mean value
\begin{equation*}
\lim_{N\to\infty}\frac{1}{N}\sum_{t=0}^{N-1}\varphi(f(t,x))
\end{equation*}
exists for every $\varphi\in C(X)$.

Although there is no Bohr theorem at hands here, yet we can extend this to abelian semigroups in another way as follows:

\begin{Athm}\label{Athm6}
Let $G\curvearrowright X$ be a continuous action of a topological abelian semigroup $G$ on the compact metric space $X$. If $y$ is a Bohr almost periodic point with $X=\mathrm{cls}_XG(y)$, then $G\curvearrowright X$ is \textit{uniquely ergodic}; i.e., it consists of a unique ergodic set (and all points of $X$ are density with respect to this measure if $G$ has inversion).
In particular, if additionally $G=\mathbb{Z}_+^d$ or $\mathbb{R}_+^d$, then $G\curvearrowright X$ is strictly ergodic.
\end{Athm}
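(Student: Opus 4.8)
The plan is to deduce everything from the Haar measure furnished by Lemma~\ref{Alem5}, after transporting the dynamical problem onto the semigroup structure produced by Theorem~\ref{Athm4}. First I would invoke Theorem~\ref{Athm4} to identify the phase space $X=\mathrm{cls}_XG(y)$ with the compact abelian semigroup $(K,\diamond)$, whose identity is $y$ and on which $G$ acts by the translations $L_{g(y)}\colon x\mapsto g(y)\diamond x$ for $g\in G$. Since $X$ is a compact metric space it is automatically second countable, so Lemma~\ref{Alem5} applies to $K=X$ and yields a unique $K$-invariant Borel probability measure $m_K$.

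Next I would show that a Borel probability measure $\mu$ on $X$ is invariant for $G\curvearrowright X$ if and only if it is $K$-invariant in the sense of Lemma~\ref{Alem5}. One direction is trivial, since $\{L_{g(y)}\}_{g\in G}$ is a subfamily of $\{L_k\}_{k\in K}$. For the converse, the orbit $G(y)$ is dense in $K$ and the translation map $K\times X\to X$, $(k,x)\mapsto k\diamond x$, is jointly continuous on a compact space, hence uniformly continuous; therefore for each fixed $\varphi\in C(X)$ the function $k\mapsto\int_X\varphi(k\diamond x)\,d\mu(x)$ is continuous on $K$, and agreeing with $\int_X\varphi\,d\mu$ on the dense set $G(y)$ it must agree on all of $K$. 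Thus $G$-invariance forces $K$-invariance, so the set of $G$-invariant probability measures coincides with the singleton $\{m_K\}$. Consequently $G\curvearrowright X$ admits exactly one invariant probability measure, which is then its unique ergodic measure; this establishes unique ergodicity, i.e.\ the first assertion.

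For the second assertion I would specialize to $G=\mathbb{Z}_+^d$ or $\mathbb{R}_+^d$ and exploit that these semigroups are amenable with the explicit Følner boxes $F_N=[0,N)^d$. The classical Oxtoby-type argument then upgrades unique ergodicity to the statement that, for every $\varphi\in C(X)$, the ergodic averages $\frac{1}{|F_N|}\sum_{g\in F_N}\varphi(g(x))$ (respectively the Lebesgue-integral average over $F_N$ in the $\mathbb{R}_+^d$ case) converge to $\int_X\varphi\,dm_K$ \emph{uniformly} in $x\in X$. Uniform convergence for all $\varphi$ exactly says that every $x\in X$ is a density (generic) point for $m_K$, which together with unique ergodicity is the definition of strict ergodicity.

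The main obstacle I expect lies in this last step. The reduction of unique ergodicity to Lemma~\ref{Alem5} is soft, being purely a density-plus-continuity argument; but promoting one invariant measure to the equidistribution of \emph{every} orbit genuinely requires an averaging net compatible with the semigroup, and this is precisely why the density clause is restricted to $\mathbb{Z}_+^d$ and $\mathbb{R}_+^d$ (or, as the parenthetical remark signals, to $G$ possessing an inversion). For a general abelian semigroup lacking a Følner structure the Oxtoby equivalence between unique ergodicity and uniform convergence of the averages need not hold, so the strict ergodicity conclusion cannot be expected without the amenability of the acting semigroup that $\mathbb{Z}_+^d$ and $\mathbb{R}_+^d$ supply.
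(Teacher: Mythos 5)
Your argument for unique ergodicity is essentially the paper's own: both proofs transport the action onto the semigroup $(K,\diamond)$ of Theorem~\ref{Athm4}, use density of $G(y)$ plus continuity of $\diamond$ to upgrade $G$-invariance of an invariant measure $\mu$ to invariance under every translation $L_z$, $z\in K$, and then quote the uniqueness half of Lemma~\ref{Alem5} to force $\mu=m_K$ (the paper additionally invokes Markov--Kakutani to note nonemptiness of the set of invariant measures, which in your version is supplied by $m_K$ itself). Where you genuinely diverge is the strict ergodicity clause for $G=\mathbb{Z}_+^d$ or $\mathbb{R}_+^d$: the paper simply cites \cite[Corollary~2.7]{ChD} to conclude that $X$ is $G$-minimal, whence the support of the unique invariant measure, being a nonempty closed invariant set, is all of $X$; you instead run an Oxtoby-type F{\o}lner argument, which essentially anticipates the proof of Theorem~\ref{Athm9} later in the paper. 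Your route is more self-contained but proves a slightly different conclusion: uniform convergence of the averages makes every point \emph{generic}, whereas ``density point'' in the sense of \cite[Definition~VI.9.33]{NS} is the support condition $\mathrm{supp}(\mu)=X$. Genericity of every point does not by itself rule out a nonempty open set of measure zero; to close that gap along your route you would still need either the minimality input or the observation that, by Bohr almost periodicity, the return times of $y$ to any nonempty open set form a (left syndetic, hence positive-lower-density) set along the boxes $[0,N)^d$, forcing every open set to have positive measure. With that one additional remark your proof is complete and is a legitimate alternative to the paper's appeal to \cite{ChD}.
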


\begin{proof}
Let $\mathscr{M}_{\textsl{inv}}(G\curvearrowright X)$ denote the weak-* compact convex set of all the $G$-invariant Borel probability measures on $X$. Since $G$ is commutative, it follows from the Markov-Kakutani fixed-point theorem that $\mathscr{M}_{\textsl{inv}}(G\curvearrowright X)\not=\varnothing$. It is easy to see that $\mu\in\mathscr{M}_{\textsl{inv}}(G\curvearrowright X)$ is ergodic if and only if it is an extremal point of the convex set $\mathscr{M}_{\textsl{inv}}(G\curvearrowright X)$ (see, e.g., \cite[Proposition~3.4]{Fur}).

Next, let $\mu\in\mathscr{M}_{\textsl{inv}}(G\curvearrowright X)$ be arbitrarily given. According to Theorem~\ref{Athm4}, for every $g\in G$, the left-translation
$$
L_g\colon X\rightarrow X;\quad x\mapsto g(y)\diamond x
$$
preserves the measure $\mu$ invariant, where $\diamond$ is the commutative multiplication in $X$ defined by Theorem~\ref{Athm4}.

Since $G(y)$ is dense in $X$, we can get that for every $z\in X$, the left-translation
$L_z\colon X\rightarrow X$, given by $x\mapsto z\diamond x$, preserves $\mu$ invariant as well; this is because as $g_n(y)\to z$ there follows that $\varphi(L_{g_n}(x))\to\varphi(L_z(x))$ for any $\varphi\in C(X)$. Therefore by Lemma~\ref{Alem5}, $\mu$ is just the Haar probability measure $m_X$ of $(X,\diamond)$, which is left and right invariant since $X$ is commutative and compact. This means that $\mathscr{M}_{\textsl{inv}}(G\curvearrowright X)$ exactly consists of a single point $\mu$ (such that all points of $X$ are density with respect to this measure if $G$ has inversion).

In particular, if $G=\mathbb{Z}_+^d$ or $\mathbb{R}_+^d$, then from \cite[Corollary~2.7]{ChD} it follows that $X$ is $G$-minimal and hence $G\curvearrowright X$ is strictly ergodic.

This completes the proof of Theorem~\ref{Athm6}.
\end{proof}

\section{Bohr pointwise convergence theorem}\label{sec4}
Now conversely our Theorem~\ref{Athm6} results in Bohr's pointwise convergence theorem for abelian semigroups acting continuously on a compact metric space.

From now on, let $(G,\circ)$ be an lcsc semigroup, where $\circ$ denotes the binary operation in $G$. By a \textit{Radon measure} on $G$ we mean here a Borel measure that is finite on each compact subset and positive on some compact subset of $G$.

The following concept is a generalization of the classical Haar measure of lcsc groups.

\begin{Adefn}\label{def7}
A Radon measure $\lambda_G$ on $G$ is called a (left) \textit{quasi-Haar measure} of $G$ (for discrete $G$, we take this to be the counting measure $\#$ on $G$), if for any compact subsets $K$ of $G$ there holds $\lambda_G(K)=\lambda_G\left(g\circ K)\right)$ for any $g\in K$.
\end{Adefn}

For example, the standard Lebesgue measure on $G=\mathbb{R}_+^d$ is a left and right quasi-Haar measure, but not a Haar measure. Clearly, the counting measure $\#$ on $G=\mathbb{Z}_+^d$ is a quasi-Haar, but not Haar, measure. Both $\mathbb{R}_+^d$ and $\mathbb{Z}_+^d$ are such that the left translation $L_g\colon G\rightarrow G$ is continuous injective for each $g\in G$. In fact, if $G$ is discrete and if $L_g\colon t\mapsto g\circ t$ is continuous and injective for all $g\in G$, then $\lambda_G=\#$ is a left quasi-Haar measure on $G$.

We consider another lcsc discrete semigroup with no inversion. Let $G$ be the set of all nonsingular, nonnegative, and integer $n\times n$ matrices. Since $G$ may be thought of as an open subspace of $\mathbb{R}_+^{n\times n}$, it is an lcsc discrete semigroup under the standard matrix multiplication such that $L_g\colon G\rightarrow G$ is continuous and injective for each $g\in G$. Since the inverse of a nonnegative nonsingular matrix is not necessarily nonnegative, for example,
\begin{equation*}
\left[\begin{matrix}1&1\\0&1\end{matrix}\right]^{-1}=\left[\begin{matrix}1&-1\\0&1\end{matrix}\right],
\end{equation*}
$G$ is only a semigroup, but not a group, with the standard matrix multiplication operation.

\begin{Adefn}\label{def8}
Let $\lambda_G$ be a left quasi-Haar measure on $G$. We refer to a sequence of compact subsets $\mathcal{F}=(F_n)_{n=1}^\infty$ of $G$ as a \textit{F{\o}lner sequence} w.r.t. $\lambda_G$, if
\begin{equation*}
\lim_{n\to\infty}\frac{\lambda_G\left(F_n\vartriangle g\circ F_n\right)}{\lambda_G(F_n)}=0\quad \forall g\in G.
\end{equation*}
Here $\vartriangle$ means the symmetric difference of sets.
\end{Adefn}

It is well known that if $G$ is an \textsl{amenable lcsc} group, then one can always find a F{\o}lner sequence $\mathcal{F}=(F_n)_1^\infty$ of compact subsets of $G$ w.r.t. the left Haar measure $\lambda_G$.
An abelian lcsc group is amenable.

For any Borel probability measure $\mu$ in the compact metric space $X$, write $\mathscr{C}_\textsl{b}^\mu(X)$ as the set of all bounded Borel real/complex functions defined on the space $X$ whose discontinuities form a set of $\mu$-measure zero.

\begin{Athm}\label{Athm9}
Let $G$ be an lcsc abelian semigroup with a quasi-Haar measure $\lambda_G$ such that $L_g\colon G\rightarrow G$ is injective for each $g\in G$. Let $G\curvearrowright X$ be a Bohr almost periodic, continuous action of  $G$ on the compact metric space $X$, which preserves a Borel probability measure $\mu$ in $X$. Then for any F{\o}lner sequence $\mathcal{F}$ w.r.t. $\lambda_G$ of $G$ and any $\varphi\in \mathscr{C}_\textsl{b}^\mu(X)$,
\begin{equation*}
\frac{1}{\lambda_G(F_n)}\int_{F_n}\varphi(g(x))d\lambda_G(g)\to\int_X\varphi d\mu\quad \textrm{as }n\to\infty,
\end{equation*}
uniformly for $x\in\mathrm{supp}(\mu)$.
\end{Athm}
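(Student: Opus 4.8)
The plan is to establish the convergence first for continuous $\varphi\in C(X)$ and then to pass to a general $\varphi\in\mathscr{C}_\textsl{b}^\mu(X)$ by a squeezing argument; throughout write $A_n\varphi(x)=\lambda_G(F_n)^{-1}\int_{F_n}\varphi(g(x))\,d\lambda_G(g)$. The three ingredients I would combine are the equicontinuity of Theorem~\ref{Athm2}, the asymptotic invariance forced by the F{\o}lner condition, and the unique ergodicity of Theorem~\ref{Athm6}. It is convenient to work on $Y=\mathrm{supp}(\mu)$, a closed $G$-invariant set carrying $\mu$ with full support; the action on $Y$ is still Bohr almost periodic, so Theorem~\ref{Athm6} gives that $Y$ is uniquely ergodic and that $\mu$ is its unique---hence ergodic---invariant measure.

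For continuous $\varphi$ I would first check that $\{A_n\varphi\}_{n\ge1}$ is a uniformly bounded, equicontinuous family on $Y$: given $\varepsilon>0$, Theorem~\ref{Athm2} yields $\delta>0$ so that $d(x,z)<\delta$ makes $d(g(x),g(z))$ small simultaneously for all $g\in G$, whence the uniform continuity of $\varphi$ gives $|\varphi(g(x))-\varphi(g(z))|<\varepsilon$ for every $g$, and averaging gives $|A_n\varphi(x)-A_n\varphi(z)|\le\varepsilon$ for all $n$. By Arzel\`a--Ascoli it then suffices to identify each uniform subsequential limit $\varphi^\ast=\lim_kA_{n_k}\varphi$. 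To see that $\varphi^\ast$ is $G$-invariant I would use the F{\o}lner condition: for $h\in G$, rewriting $g(h(x))=(h\circ g)(x)$ by commutativity and changing variables through the \emph{injective} $\lambda_G$-preserving map $L_h$ (this is exactly where the hypotheses that $L_h$ is injective and that $\lambda_G$ is a quasi-Haar measure are used), one obtains
$$A_n\varphi(h(x))-A_n\varphi(x)=\frac{1}{\lambda_G(F_n)}\left(\int_{h\circ F_n}-\int_{F_n}\right)\varphi(g(x))\,d\lambda_G(g),$$
so that $|A_n\varphi(h(x))-A_n\varphi(x)|\le\|\varphi\|_\infty\,\lambda_G(F_n\vartriangle h\circ F_n)/\lambda_G(F_n)\to0$; letting $n=n_k\to\infty$ gives $\varphi^\ast\circ h=\varphi^\ast$.

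Since every $L_g$ preserves $\mu$, we have $\int_YA_n\varphi\,d\mu=\int_Y\varphi\,d\mu$ for all $n$, and hence $\int_Y\varphi^\ast\,d\mu=\int_X\varphi\,d\mu$. Because $\mu$ is ergodic (Theorem~\ref{Athm6}), the invariant function $\varphi^\ast$ is $\mu$-a.e.\ constant, and being continuous it is \emph{everywhere} equal on $Y=\mathrm{supp}(\mu)$ to that constant, which must therefore be $\int_X\varphi\,d\mu$. As all subsequential limits agree with $\int_X\varphi\,d\mu$ on $Y$, Arzel\`a--Ascoli upgrades this to $A_n\varphi\to\int_X\varphi\,d\mu$ uniformly on $\mathrm{supp}(\mu)$. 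Finally, for $\varphi\in\mathscr{C}_\textsl{b}^\mu(X)$ whose discontinuity set is $\mu$-null, I would sandwich $\underline\varphi\le\varphi\le\overline\varphi$ by continuous functions with $\int(\overline\varphi-\underline\varphi)\,d\mu<\varepsilon$ (take the lower and upper semicontinuous envelopes, whose integrals coincide with $\int\varphi\,d\mu$ by $\mu$-a.e.\ continuity, and approximate them monotonically by continuous functions using regularity of $\mu$); since $A_n$ is positive and $A_n\underline\varphi,A_n\overline\varphi$ converge uniformly on $\mathrm{supp}(\mu)$ to values within $\varepsilon$ of $\int_X\varphi\,d\mu$, a squeeze finishes the proof.

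The step I expect to be the main obstacle is the identification in the previous paragraph, namely forcing the invariant limit to equal $\int_X\varphi\,d\mu$ \emph{precisely on} $\mathrm{supp}(\mu)$. Because $G$ carries no inversion, different points may have genuinely different orbit closures, and off the support the averages converge instead to the Haar mass of a larger orbit closure rather than to $\int_X\varphi\,d\mu$; it is exactly the ergodicity and minimality on $\mathrm{supp}(\mu)$ delivered by Theorem~\ref{Athm6} that pins the constant and explains why the conclusion is stated on the support only. A secondary but genuine point is the legitimacy of the change of variables for the non-invertible $L_g$ in the invariance step, which is precisely what the injectivity hypothesis together with the quasi-Haar (as opposed to Haar) invariance are designed to secure.
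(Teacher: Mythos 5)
Your argument is correct, but it runs along a genuinely different track from the paper's. The paper works at the level of measures: it defines empirical measures $\mu_{x,n}$ by $\int\psi\,d\mu_{x,n}=\lambda_G(F_n)^{-1}\int_{F_n}\psi(t(x))\,d\lambda_G(t)$, observes that every weak-* limit point is $G$-invariant by the F{\o}lner property and hence equals $\mu$ by Theorem~\ref{Athm6}, and then invokes a cited portmanteau-type lemma (\cite[Lemma~2.1]{Dai}) to pass from weak-* convergence to convergence of $\int\varphi\,d\mu_{x,n}$ for $\varphi\in\mathscr{C}_\textsl{b}^\mu(X)$; uniformity is obtained by contradiction, extracting a weak-* convergent subsequence from $\mu_{x_{n_k},n_k}$. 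You instead work at the level of the averaging functions $A_n\varphi$: equicontinuity of the action (Theorem~\ref{Athm2}) makes $\{A_n\varphi\}$ precompact in $C(Y)$, the F{\o}lner condition forces every uniform subsequential limit to be $G$-invariant, and ergodicity of the unique invariant measure together with full support pins the limit to the constant $\int\varphi\,d\mu$; your sandwich by semicontinuous envelopes then reproves, in effect, the cited portmanteau lemma. Each route buys something: the paper's proof never uses Theorem~\ref{Athm2} at all (so it applies to any uniquely ergodic F{\o}lner-averaged action, Bohr almost periodicity entering only through Theorem~\ref{Athm6}), while yours is self-contained, gets uniformity directly from Arzel\`a--Ascoli rather than by contradiction, and makes explicit both where injectivity of $L_h$ and the quasi-Haar property enter (the change of variables in the invariance step, which the paper leaves implicit in ``the basic property of F{\o}lner sequence'') and why the conclusion must be restricted to $\mathrm{supp}(\mu)$. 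One point you share with the paper and could tighten: reducing to $Y=\mathrm{supp}(\mu)$ and then applying Theorem~\ref{Athm6} tacitly assumes that $Y$ is the orbit closure of a Bohr almost periodic point, which is what the hypothesis ``Bohr almost periodic action'' is meant to supply but which neither proof spells out.
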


\begin{proof}
First of all we may assume that $X=\mathrm{supp}(\mu)$. Let $\mathcal{F}=(F_n)_{n=1}^\infty$ be an arbitrary F{\o}lner sequence of $G$ with respect to the quasi-Haar measure $\lambda_G$ and let $\varphi\in \mathscr{C}_\textsl{b}^\mu(X)$ be any given.

For any point $x\in X$ and $n\ge1$, using the Riesz representation theorem we now first define an empirical probability measure $\mu_{x,n}$ in $X$ as follows:
$$
\int_X\psi(y)d\mu_{x,n}(y)=\frac{1}{\lambda_G(F_n)}\int_{F_n}\psi(t(x))d\lambda_G(t),\quad \forall\psi\in C(X).
$$
Let $\tilde{\mu}$ be an arbitrary limit point of the sequence $(\mu_{x,n})_{n=1}^\infty$ under the weak-* topology; then from the basic property of F{\o}lner sequence it follows that $\tilde{\mu}$ is $G$-invariant. Thus $\tilde{\mu}=\mu$ by Theorem~\ref{Athm6} and then $\mu_{x,n}$ converges weakly-* to $\mu$ for all $x\in X$. This implies by \cite[Lemma~2.1]{Dai} that
\begin{equation*}
\frac{1}{\lambda_G(F_n)}\int_{F_n}\varphi(t(x))d\lambda_G(t)\to\int_X\varphi d\mu\quad \textrm{as }n\to\infty
\end{equation*}
for each point $x\in X$.

To prove the desired uniformity, we may assume $\int_X\varphi d\mu=0$ without loss of generality. By contradiction, let there exist some $\varepsilon>0$ and a sequence of points $(x_{n_k})_{k=1}^\infty$ in $X$ so that
\begin{equation*}
\left|\frac{1}{\lambda_G(F_{n_k})}\int_{F_{n_k}}\varphi(t(x_{n_k}))d\lambda_G(t)\right|\ge\varepsilon,\quad \forall k>0.
\end{equation*}
By throwing away a subsequence of $(n_k)$ if necessary, we can assume
$$
\mu_{x_{n_k},n_k}\xrightarrow[]{\textrm{weakly-*}}\mu\quad \textrm{as }k\to\infty.
$$
Then by \cite[Lemma~2.1]{Dai} once again
$$\frac{1}{\lambda_G(F_{n_k})}\int_{F_{n_k}}\varphi(t(x_{n_k}))d\lambda_G(t)\to0$$
which is a contradiction.

This concludes the proof of Theorem~\ref{Athm9}.
\end{proof}

We note that if the F{\o}lner sequence $\mathcal{F}$ of $G$ in Theorem~\ref{Athm9} satisfies the additional essential \textit{Shulman Condition}, that is, for some $C>0$ and all $n>0$ we have
\begin{equation*}
\lambda_G\left({\bigcup}_{k<n}F_k^{-1}F_n\right)\le C\lambda_G(F_n),
\end{equation*}
where $G$ need to be required to be a \textit{group} and $F_k^{-1}=\{g^{-1}\colon g\in F_k\}$,
then from the pointwise ergodic theorem of Lindenstrauss~\cite[Theorem~1.2]{Lin} it follows that the pointwise convergence holds for any $\varphi\in L^1(X,\mathscr{B}(X),\mu)$. Because of lacking of this Shulman condition in our context we, however, cannot generalize the pointwise convergence in Theorem~\ref{Athm9} to $L^1$-functions, not even for $L^\infty$-functions. Indeed for $G=\mathbb{Z}$, let $\mathcal{F}$ be chosen as
$$
F_n=\left\{n^2, n^2+1, \dotsc,n^2+n\right\};
$$
then A.~del Junco and J.~Rosenblatt showed in \cite{JR} that there always exists certain $\varphi\in L^\infty(X,\mathscr{B}(X),\mu)$ such that
$$\frac{1}{\#F_n}\sum_{g\in F_n}\varphi(g(x))$$
does not have a limit almost everywhere, if $(X,\mathscr{B}(X),\mu)$ is nontrivial.

\section*{\textbf{Acknowledgments}}%
This work was partly supported by National Natural Science Foundation of China grant $\#$11431012, 11271183 and PAPD of Jiangsu Higher Education Institutions.


\begin{thebibliography}{10}
\bibitem{ChD}
   \newblock {B.~Chen and X.~Dai},
   \newblock {\textsl{On uniformly recurrent motions of topological semigroup actions}},
   \newblock {Discret. Contin. Dyn. Syst., 36 (2016), 2931--2944}.

\bibitem{Dai09}
   \newblock {X.~Dai},
   \newblock {\textsl{Integral expressions of Lyapunov exponents for autonomous ordinary differential systems}},
   \newblock {Sci. in China Ser. A: Mathematics, 52 (2009), 195--216}.

\bibitem{Dai}
   \newblock {X.~Dai},
   \newblock {\textsl{Optimal state points of the subadditive ergodic theorem}},
   \newblock {Nonlinearity, 24 (2011), 1565--1573}.

\bibitem{Eg}
   \newblock {J.~Egawa},
   \newblock {\textsl{A characterization of regularly almost periodic minimal flows}},
   \newblock {Proc. Japan Acad. Ser.~A, 71 (1995), 225--228}.

\bibitem{Fur}
   \newblock {H.~Furstenberg},
   \newblock {\textsl{Recurrence in Ergodic Theory and Combinatorial Number Theory}},
   \newblock {Princeton University Press, Princeton, New Jersey, 1981}.

\bibitem{GH}
   \newblock {W.H.~Gottschalk and G.A.~Hedlund},
   \newblock {\textsl{Topological Dynamics}},
   \newblock {Amer. Math. Soc. Coll. Publ., Vol. 36, Amer. Math. Soc., Providence, R.I., 1955}.

\bibitem{JR}
   \newblock {A.~del Junco and J.~Rosenblatt},
   \newblock {\textsl{Counterexamples in ergodic theory and number theory}},
   \newblock {Math. Ann., 245 (1979), 185--197}.

\bibitem{Lin}
   \newblock {E.~Lindenstrauss},
   \newblock {\textsl{Pointwise theorems for amenable groups}},
   \newblock {Invent. Math., 146 (2001), 259--295}.

\bibitem{MR}
   \newblock {A.~Miller and J.~Rosenblatt},
   \newblock {\textsl{Characterizations of regular almost periodicity in compact minimal abelian flows}},
   \newblock {Trans. Amer. Math. Soc., 356 (2004), 4909--4929}.

\bibitem{NS}
   \newblock {V.V.~Nemytskii and V.V.~Stepanov},
   \newblock {\textsl{Qualitative Theory of Differential Equations}},
   \newblock {Princeton University Press, Princeton, New Jersey 1960}.

\bibitem{Roy}
   \newblock {H.L.~Royden},
   \newblock {\textsl{Real Analysis}},
   \newblock {3rd ed. MacMillan, NY, 1988}.

\bibitem{ST}
   \newblock {V.V.~Stepanov and A.~Tychonoff},
   \newblock {\textsl{\"{U}ber die R\"{a}ume der fastperiodischen Funktionen}},
   \newblock {Recueil Math\'{e}matique, Nouvelle S\'{e}rie, 41 (1934), 166--178}.
\end{thebibliography}
\end{document}